\def\@seccntDot{.}
\def\@seccntformat#1{\csname the#1\endcsname\@seccntDot\hskip 0.5em}
\renewcommand\section{\@startsection{section}{1}{\z@}%
{18\p@ \@plus 6\p@ \@minus 3\p@}%
{9\p@ \@plus 6\p@ \@minus 3\p@}%
{\large\bfseries\boldmath}}
\renewcommand\subsection{\@startsection{subsection}{2}{\z@}%
{12\p@ \@plus 6\p@ \@minus 3\p@}%
{3\p@ \@plus 6\p@ \@minus 3\p@}%
{\bfseries\boldmath}}
\renewcommand\subsubsection{\@startsection{subsubsection}{3}{\z@}%
{12\p@ \@plus 6\p@ \@minus 3\p@}%
{\p@}%
{\bfseries\boldmath}}
\theoremstyle{plain}
\newtheorem{theorem}{Theorem}
\newtheorem{lemma}{Lemma}
\newtheorem{corollary}{Corollary}
\newtheorem{proposition}{Proposition}
\newtheorem{conjecture}{Conjecture}
\newtheorem{definition}{Definition}
\numberwithin{equation}{section}
\begin{document}

\title{The Cop Number of Graphs with Forbidden Induced Subgraphs}
\author{Mingrui Liu
\thanks{Yau Mathematical Sciences Center, Tsinghua University, Beijing 100084, China
({\tt lmr16@mails.tsinghua.edu.cn}). }}

\maketitle
\begin{abstract}
\par\vspace{2mm}

In the game of Cops and Robber, a team of cops attempts to capture a robber on a graph $G$. Initially, all cops occupy some vertices in $G$ and the robber occupies another vertex. In each round, a cop can move to one of its neighbors or stay idle, after which the robber does the same. The robber is caught by a cop if the cop lands on the same vertex which is currently occupied by the robber. The minimum number of cops needed to guarantee capture of a robber on $G$ is called the {\em cop number} of $G$, denoted by $c(G)$.

We say a family $\cal F$ of graphs is {\em  cop-bounded} if there is a constant $M$ so that $c(G)\leq M$ for every graph $G\in \cal F$.
Joret, Kamin\'nski, and Theis [Contrib. Discrete Math. 2010] proved that the class of all graphs not containing a graph $H$ as an induced subgraph is cop-bounded if and only if $H$ is a linear forest;
morerover, $C(G)\leq k-2$ if 
if $G$ is induced-$P_k$-free for $k\geq 3$.
In this paper, we consider the cop number of a family of graphs forbidding certain two graphs and generalized some previous results.


\noindent{\bfseries Keywords:} Cops and robbers; Cop number; Vertex-pursuit games; Moving target search  

\noindent{\bfseries AMS classification:} 05C57
\end{abstract}

\section{Introduction}
The game of $\textit{Cops and Robber}$ is played on a connected graph $G$ by two players: $\cal C$ and $\cal R$. The player $\cal C$ controls $k$ pieces (cops) for some integer $k\geq 1$, and the player $\cal R$ has only one piece (the robber). In the beginning of the game, $\cal C$ put down 
$k$ cops on some vertices of $G$ (not necessary distinct). Then $\cal R$ put down the robber on a vertex of $G$. Then, the two players move their pieces alternately. In each of the succeeding rounds, each piece either moves to a vertex adjacent to its current position or stays still. 
We say the game is {\em cop-win} if after a finite rounds, a cop and the robber meet at the same vertex (that is, the cops catch the robber). It is {\rm robber-win} if the robber can avoid being caught by cops after arbitrarily many rounds. Both players have complete information, that is, they know the graph and the positions of all the pieces. In this paper, we only consider the game played on finite undirected graphs. 

The most important problem we considered in this game is how many cops we need to capture the robber on a given graph. The minimum number of cops guaranteed to capture the robber on the graph $G$ in finite rounds is called the $\textit{cop number}$ of $G$ and denoted by $c(G)$. Nowakowski and Winkler \cite{MR685631} and Quilliot \cite{name1} characterized the class of graphs with cop number $1$. Aigner and Fromme \cite{MR739593}
proved any planar graph $G$ has cop number at most $3$.
Finding a combinatorial characterization of graphs with cop number $k$ (for $k\geq 2$) is a major open problem in this field. Moreover, determining the cop number for a given graph is also a computationally hard problem \cite{MR2757374}. We even don't know the order of the cop number upon the number of vertices of any graph $G$ \cite{MR890640}. For more information about the game of cops and robber we recommend the book by Bonato and Nowakowski \cite{MR2830217} and the survey by Baird and Bonato \cite{MR2980752}.

\begin{definition}
Let $G_1,G_2,\ldots, G_n$ be some graphs, we say a graph $G$ is \{$G_1$,$G_2$,\ldots, $G_n$\}-free if $G$ does not contain any $G_i$ as an induced subgraph. We say $G$ is ($G_1+G_2+\ldots+G_n$)-free if $G$ does not contain the disjoint union of all the graphs $G_1,G_2,\ldots, G_n$ as an induced subgraph.
\end{definition}

\begin{definition}
For a vertex $v$ in a graph $G$, the closed neighbourhood of $v$, denoted by $N[v]$ is the set of all neighbors of $v$, including $v$ itself. The open neighbourhood of $v$, denoted by $N(v)$ is the set of all neighbors of $v$. What's more, $N[v_1,v_2]$ is the union of $N[v_1]$ and $N[v_2]$. 
\end{definition}

\begin{definition}
The graph consisting of a path on $k$ vertices is denoted by $P_k$.
\end{definition}

\begin{definition}
For the graph $G$, the subgraph $G[U]$ is the induced subgraph generated by the subset $U$ of the vertices of $G$.
\end{definition}

Joret, Kaminski and Theis \cite{MR2791289} proved that $k-2$ cops can catch the robber in any $P_k$-free graph for every interger $k\geq 3$. In this paper, we prove this theorem with a new method and use this idea to explore the cop number for a graph $G$ with some forbidden induced structures.

\begin{theorem}\label{thm:Pk}\cite{MR2791289}
Let $G$ be a $P_k$-free graph with $k \geq 3$, then $k-2$ cops can catch the robber in $G$. That is, $c(G)\leq k-2$.
\end{theorem}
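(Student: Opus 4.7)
The plan is to induct on $k$. For the base case $k=3$, any connected $P_3$-free graph is a complete graph: if $u,v$ are non-adjacent, a shortest $u$–$v$ path has length at least $2$ and is induced, giving an induced $P_{\ge 3}$. One cop placed anywhere on a clique catches the robber in a single move, so $c(G)\le 1 = k-2$.

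For the inductive step, assume the bound holds for $P_{k-1}$-free graphs with $k-3$ cops, and let $G$ be a connected $P_k$-free graph with $k\ge 4$. The strategy is to deploy one ``anchor'' cop $c_1$ that drives the robber into a subgraph where the induction hypothesis applies, and to use the remaining $k-3$ cops to finish inside that subgraph. I would place $c_1$ at a vertex $v_0$ and have it advance, one step per round, along a current shortest path from its position to the robber's. Since every shortest path in $G$ is induced and $G$ is $P_k$-free, each such path has at most $k-1$ vertices, so $c_1$ is always within distance $k-2$ of the robber; after finitely many rounds, via a shadow/Aigner--Fromme-style guarding argument, $c_1$ can be arranged to occupy a vertex $u$ from which the robber is confined to a single component $C$ of $G-N[u]$ and cannot safely cross into $N[u]$.

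The crux is then the reduction: one shows that $G[C]$ (or a closely related subgraph) is $P_{k-1}$-free, so that $k-3$ cops suffice inside $C$ by induction while $c_1$ holds $u$. I would prove this by contradiction: given a hypothetical induced $P_{k-1}$ inside $C$, say $q_1q_2\cdots q_{k-1}$, take a shortest path from $u$ through $N[u]$ to the closest vertex $q_{j^*}$ of this path, and choose whether to extend along $q_{j^*}, q_{j^*+1},\ldots, q_{k-1}$ or along $q_{j^*},q_{j^*-1},\ldots, q_1$ so as to maximize the total number of vertices in the resulting induced path; using $P_k$-freeness to bound the shortest-path piece and a careful choice of entry vertex, one argues that at least one of the two concatenations produces an induced $P_k$ in $G$, contradicting the hypothesis.

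The main obstacle lies exactly at this reduction: one must guarantee that the entry vertex from $N[u]$ into $C$ has a ``clean'' adjacency pattern with the hypothetical induced $P_{k-1}$, meeting it in essentially one place, so that the concatenation remains induced. I expect overcoming this will require a careful choice of $v_0$ and a refinement of the plain shortest-path chase into a shadow strategy on a longest induced path to the robber, possibly together with a slight strengthening of the inductive statement that records adjacency information of the anchor cop to the residual component. Once this bookkeeping is in place, the induction step closes cleanly and yields $c(G)\le 1+(k-3) = k-2$.
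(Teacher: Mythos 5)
Your base case is fine, and the confinement step is standard (a cop parked at $u$ does keep the robber inside one component of $G-N[u]$), but the inductive step breaks at exactly the point you flag as the crux: the reduction lemma is false. It is not true that a component $C$ of $G-N[u]$ in a connected $P_k$-free graph must be $P_{k-1}$-free. Concretely, take $k=7$: let $q_1q_2\cdots q_6$ be an induced path, add a vertex $w$ adjacent to exactly $q_3$ and $q_4$, and a vertex $u$ adjacent only to $w$. Any induced path containing $w$ can contain at most one of $q_3,q_4$ and hence lives in $\{u,w,q_1,q_2,q_3\}$ or $\{u,w,q_4,q_5,q_6\}$ (at most $5$ vertices), and any induced path avoiding $w$ lives in $Q$ (at most $6$ vertices); so $G$ is $P_7$-free. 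Yet $G-N[u]=\{q_1,\dots,q_6\}$ is a single component inducing $P_6=P_{k-1}$. Your own concatenation argument reveals why: entering the hypothetical $P_{k-1}$ via a shortest access path $u,w_1,\dots,w_m$ and extending toward the farther end produces an induced path on only $(m+1)+\max(a,\,k-b)$ vertices, where $a,b$ are the extreme indices of $w_m$'s neighbours on $Q$; when $w_m$ attaches near the middle of $Q$ this is about $k/2$, far short of $k$. So the induction does not close, and the fixes you gesture at (choosing $v_0$ more carefully, strengthening the inductive statement) are precisely the missing content, not bookkeeping.

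For comparison, the paper avoids any such reduction: it argues directly, with no induction on $k$. Assuming $k-2$ cops never win, it takes a configuration realizing the minimum cop--robber distance $t\ge 1$, stacks all cops on one end of a shortest path to the robber, and advances them one per round; each round the robber is forced to a fresh vertex nonadjacent to all earlier vertices of the path being built, so after $k-2$ rounds one exhibits an induced path on $t+k-1\ge k$ vertices, contradicting $P_k$-freeness. Whatever one thinks of the details of that chase, it is a genuinely different route that never needs the (false) claim that deleting a closed neighbourhood shortens the longest induced path.
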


The idea we use to prove the above theorem can also be used to prove the following two theorems and we can bound the cop number of graphs with some forbidden induced subgraphs with the cop number.

\begin{theorem}\label{thm:two}
If we forbid the following two structures in any graph $G$, then $c(G)\leq l+1$ for $l \geq 1$.
\begin{figure}[hbt]
\begin{center}
\tikzstyle{vertex}=[circle,fill=black,inner sep=1.5pt]
\tikzstyle{new}=[circle,fill=red,inner sep=1.5pt]
\tikzstyle{blue}=[circle,fill=blue, inner sep=1.5pt]
\tikzstyle{green}=[circle,fill=green, inner sep=1.5pt]

\tikzstyle{curly edge}=[]
\tikzstyle{straight edge}=[]

\tikzstyle{straight edge}=[]
\tikzstyle{vertex}=[circle,fill=black,inner sep=1.5pt]

\begin{tikzpicture}[scale=0.8]
	\begin{pgfonlayer}{nodelayer}
		\node [style=vertex] (0) at (0, 0.5) {};
		\node [style=vertex] (1) at (0, -0.5) {};
		\node [style=vertex] (2) at (1, 0) {};
		\node [style=vertex] (3) at (2, 0) {};
		\node [style=vertex] (4) at (3, 0) {};
	\end{pgfonlayer}
	\begin{pgfonlayer}{edgelayer}
		\draw [style=straight edge] (1) to (2);
		\draw [style=straight edge] (0) to (2);
		\draw [style=straight edge] (2) to (3);
		\draw [style=dashed] (3) to (4);
	\end{pgfonlayer}
	\node at (2,-.6) {$\underbrace{\hspace{4 em}}_{l+1 \hbox{ vertices}}$};
\end{tikzpicture}
\hspace{1cm}
\begin{tikzpicture}[scale=0.8]
	\begin{pgfonlayer}{nodelayer}
		\node [style=vertex] (0) at (0, 0.5) {};
		\node [style=vertex] (1) at (0, -0.5) {};
		\node [style=vertex] (2) at (1, 0) {};
		\node [style=vertex] (3) at (2, 0) {};
		\node [style=vertex] (4) at (3, 0) {};
	\end{pgfonlayer}
	\begin{pgfonlayer}{edgelayer}
		\draw [style=straight edge] (0) to (1);
		\draw [style=straight edge] (1) to (2);
		\draw [style=straight edge] (0) to (2);
		\draw [style=straight edge] (2) to (3);
		\draw [style=dashed] (3) to (4);
	\end{pgfonlayer}
	\node at (2,-.6) {$\underbrace{\hspace{4 em}}_{l+1 \hbox{ vertices}}$};
\end{tikzpicture}
\hspace{1cm}
\label{fig:graphs}
\end{center}
\caption{The structures mentioned in Theorem 2}
\end{figure}
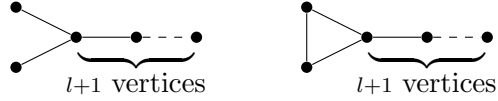
\end{theorem}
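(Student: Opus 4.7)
The plan is to generalize the iterative shortest-path strategy used to prove Theorem~\ref{thm:Pk}, where the stronger $P_k$-free hypothesis is replaced by $\{H_1,H_2\}$-freeness. We play with $l+1$ cops and build, over at most $l+1$ phases, an induced path $v_1v_2\cdots v_s$ in $G$ with a cop placed (and, if needed, guarding a shortest path containing it via the Aigner--Fromme technique) at each $v_i$. At the start of phase $i$, the cops at $v_1,\ldots,v_i$ confine the robber, after finitely many rounds, to some connected component $C_i$ of $G\setminus N[v_1,\ldots,v_i]$. To advance to phase $i+1$, we take $v_{i+1}$ to be the second vertex on a shortest $v_i$-to-$C_i$ path. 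The minimality of the chosen shortest path forces $v_{i+1}\notin N(v_j)$ for every $j<i$ (any chord would produce a shortcut from $v_j$ into $C_j\supseteq C_i$, contradicting the choice of $v_{j+1}$ made in phase $j+1$), so $v_1v_2\cdots v_{i+1}$ remains an induced path.

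To finish, I plan to show the game must terminate within $l+1$ phases. Suppose for contradiction that $C_{l+1}\neq\emptyset$; then the path extends to an induced $v_1v_2\cdots v_{l+2}$ on $l+2$ vertices. Consider the sub-path $P=v_2v_3\cdots v_{l+2}$, which has exactly $l+1$ vertices. The vertex $v_1$ serves as one ``extra'' attached to the endpoint $v_2$ of $P$: it is adjacent to $v_2$ but to none of $v_3,\ldots,v_{l+2}$, by induced-ness of the full path. To apply $\{H_1,H_2\}$-freeness, I need to exhibit a second such extra: a vertex $u\in N(v_2)\setminus(V(P)\cup\{v_1\})$ with no edges to $v_3,\ldots,v_{l+2}$. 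Once such a $u$ is found, the set $V(P)\cup\{v_1,u\}$ induces a copy of $H_1$ (if $u\not\sim v_1$) or of $H_2$ (if $u\sim v_1$), contradicting the hypothesis on $G$.

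The main obstacle is producing this second neighbor $u$. The intuition is that $v_2$ should have at least two distinct ``entries'' into the robber's region $C_1$: one becomes $v_3$, and the other is a candidate for $u$. A priori, however, the candidate might be adjacent to some $v_j$ with $j\geq 3$ and thus disqualified. Resolving this likely requires strengthening the phase transition: at each step, choose $v_{i+1}$ not merely as any next vertex on a shortest $v_i$-to-$C_i$ path, but as one that additionally preserves a reserve neighbor of $v_2$ avoiding the later vertices. If no such reserve can be maintained at some phase, then $v_2$'s ``interface'' with $C_1$ is narrow, and the cop at $v_2$ effectively separates $G$ into strictly smaller pieces on which an induction on $|V(G)|$ applies with the same cop budget. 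Balancing these two cases---extracting $u$ in the ``wide'' case and recursing in the ``narrow'' case---consistently with the $(l+1)$-cop budget is the central technical challenge of the proof.
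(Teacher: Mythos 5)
Your plan has a genuine gap, and you have correctly located it yourself: the second pendant $u$ at $v_2$ need not exist, and no strengthening of the phase transition will conjure it. The deeper problem is that your termination argument aims at the wrong target. The two forbidden structures do \emph{not} exclude long induced paths: each contains an induced $P_{l+2}$ but nothing longer, and each contains a vertex of degree $3$, so any graph of maximum degree $2$ (a long path, a long cycle) is $\{H_1,H_2\}$-free while containing induced paths of every length. Consequently, reaching an induced $v_1\cdots v_{l+2}$ can never by itself yield a contradiction, and the ``wide case'' you hope for simply fails in these examples. The ``narrow case'' fallback with induction on $|V(G)|$ is only gestured at, and it is not clear the cop budget survives the recursion.

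The paper uses the forbidden structures for a different purpose, and at the other end of the path. Running the \autoref{thm:Pk} pursuit with $l$ cops, one obtains an induced path $v_0v_1\cdots v_{l+1}$ with cops on $v_0,\dots,v_{l-1}$ and the robber on $v_{l+1}$; after the cops advance to $v_1,\dots,v_l$, any \emph{two} escape vertices for the robber would both be adjacent to $v_{l+1}$ (the far endpoint of the induced $P_{l+1}$ on $v_1,\dots,v_{l+1}$) and to none of $v_1,\dots,v_l$, which is exactly one of the two forbidden structures according as the two escape vertices are adjacent to each other or not. Hence the robber has at most one escape move each round: his trajectory is forced, and in a finite graph a forced trajectory closes into an induced path (the $l$ pursuers catch him) or an induced cycle (the $(l+1)$-st cop ambushes him). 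So the structures control the robber's \emph{branching}, not the length of the path, and the extra cop exists precisely for the cycle case --- two points your proposal does not address.
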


\begin{theorem}\label{thm:claw}
Let $G$ be a $\{P_k,K_{1,3}\}$-free graph for $k \geq 5$, then $c(G) \leq k-3$.
\end{theorem}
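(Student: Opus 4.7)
The plan is to prove Theorem~\ref{thm:claw} by induction on $k$, following the template of the proof of Theorem~\ref{thm:Pk}, with the extra cop saved by exploiting the claw-free hypothesis. The engine, as in the Aigner--Fromme argument, is that one cop can shadow the robber onto a geodesic $P$ so that the robber is thereafter forbidden from entering $V(P)$, confining the robber to a single connected component $H$ of $G\setminus V(P)$. The key structural fact from the $P_k$-free setting is that if $P$ is chosen as a suitable shortest path, then $H$ is $P_{k-1}$-free; since $H$ remains $K_{1,3}$-free as an induced subgraph of $G$, the inductive hypothesis will then apply directly to $H$.

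The inductive step is thus clean: for $k\ge 6$, assuming Theorem~\ref{thm:claw} for $k-1$, use one cop to guard a suitable shortest path $P$ in $G$; the residual $H$ is $\{P_{k-1},K_{1,3}\}$-free, so $c(H)\le (k-1)-3=k-4$ by induction, and together with the guarding cop this gives a total of $k-3$ cops, as required. The substance of the argument therefore lies in the base case $k=5$, where one must show directly that every connected $\{P_5,K_{1,3}\}$-free graph $G$ satisfies $c(G)\le 2$.

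For the base case I would proceed as follows. Since $G$ is $P_5$-free, its diameter is at most $3$, so a single cop $C_1$ can shadow the robber onto a shortest path $P$ with at most $4$ vertices, after which the robber is confined to some component $H$ of $G\setminus V(P)$. I would then argue, using the claw-free hypothesis together with the geometry of the attachments of $H$ to $P$, that a second cop $C_2$ can catch the robber in $H$. Claw-freeness enters by forcing every neighborhood in $G$ to have independence number at most $2$, which together with the $P_4$-freeness of $H$ (inherited via the structural lemma underlying the choice of $P$) drastically restricts the possible structure of $H$, so that the remaining configurations can be handled directly by hand.

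The main obstacle I anticipate is the base case. Although $H$ inherits strong structural properties from $G$, the bare class of $\{P_4,K_{1,3}\}$-free graphs already contains graphs such as $C_4$ which are not cop-win, so the argument cannot simply be ``$H$ is cop-win''; it must instead exploit the interaction between the guarding position of $C_1$ on $P$ and the structure of $H$ in order to rule out or neutralize the $C_4$-like residuals by using the second cop in coordination with the guard. Making this finer analysis rigorous, and simultaneously pinning down the correct notion of ``suitable'' shortest path $P$ so that the inductive $P_{k-1}$-free conclusion on $H$ is actually valid, is where I expect the bulk of the technical work to lie.
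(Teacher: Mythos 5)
There is a genuine gap: what you have written is a plan with its two load-bearing components left unproved. First, the inductive step rests entirely on the claim that a ``suitable'' shortest path $P$ can be chosen so that the component $H$ of $G\setminus V(P)$ containing the robber is $P_{k-1}$-free. As stated this is false: an induced $P_{k-1}$ inside $H$ need not extend to an induced $P_k$ in $G$, because every vertex of that path may be adjacent to internal vertices of $P$ (for instance, all of $H$ could attach to a single vertex of $P$), so $P_k$-freeness of $G$ puts no direct bound on induced paths in $H$. Making this work requires either removing a closed neighbourhood rather than just $V(P)$, or a genuinely different structural lemma, and you explicitly defer exactly this point. Second, the base case $k=5$ --- that every connected $\{P_5,K_{1,3}\}$-free graph has cop number at most $2$ --- is not proved at all; you sketch a reduction to a residual $H$ and then note that $\{P_4,K_{1,3}\}$-free residuals such as $C_4$ are not cop-win, which is precisely where the argument must do real work. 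Note that the unrestricted statement ``$P_5$-free implies $c(G)\le 2$'' is an open conjecture recorded in this very paper (Conjecture~1), so the base case cannot be waved at; it is the heart of the matter, and claw-freeness has to be deployed in a concrete trapping argument, not merely invoked as ``neighbourhoods have independence number at most $2$.''

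For contrast, the paper's proof does not induct at all. It reuses the pursuit mechanism from the proof of Theorem~\ref{thm:Pk} to force, from the assumption that $k-3$ cops fail, an induced path $v_0-\cdots-v_{k-2}$ with the first $k-3$ vertices occupied by cops and the robber at the end; $P_k$-freeness then forces the robber's only escape vertex to be adjacent to $v_0$, creating an induced $C_k$, and at that point the claw at $v_{k-1}$ (with leaves $v_{k-2}$, $v_0$, and the robber's next escape vertex) forces that escape vertex to be adjacent to $v_0$ or $v_{k-2}$, both of which the cops already guard. If you want to salvage your induction, you would need to prove the residual-path-freeness lemma in a correct form and then still supply a complete argument for $k=5$; as it stands, neither half of the proposal is a proof.
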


\noindent\textbf{Remark:} Some mathematicians refer to $K_{1,3}$ as a "claw", we shall use $K_{1,3}$ and "claw" interchangeably.

Next, we will explore the graph $G$ with forbidden induced subgraphs consisted of different paths, we find that we can use fewer cops to catch the robber than we forbid the disjoint union of the same smaller paths.

\begin{theorem}\label{thm:PP}
Let $G$ be a $(P_{i_1}+P_{i_2}+\ldots+P_{i_k})$-free graph for $k\geq 2$, $i_j\geq 1$ and one of $i_j\geq 3$ ($1\leq j \leq k$). Then $c(G)\leq i_1+i_2+\ldots +i_k-2$.
\end{theorem}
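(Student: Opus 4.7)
The plan is to prove Theorem~\ref{thm:PP} by induction on $n := i_1 + \cdots + i_k$, using Theorem~\ref{thm:Pk} both as a base case (for small $n$, or when $G$ itself is $P_{i_1}$-free) and as a subroutine inside the induction. Without loss of generality, assume $i_1 = \max_j i_j \ge 3$. The cornerstone structural observation, which I would establish first, is: if $P = u_1 u_2 \cdots u_{i_1}$ is any induced $P_{i_1}$ in $G$, then every connected component $H$ of $G \setminus N[P]$ is itself $(P_{i_2} + \cdots + P_{i_k})$-free. Indeed, an induced copy of $P_{i_2} + \cdots + P_{i_k}$ inside $H$, taken together with $P$, would form an induced $P_{i_1} + P_{i_2} + \cdots + P_{i_k}$ in $G$ (the required non-adjacencies between $P$ and $H$ are automatic because $V(H) \cap N[P] = \emptyset$), contradicting the hypothesis.

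With this claim in hand, the cop strategy splits into two cases. If $G$ contains no induced $P_{i_1}$, then $G$ is $P_{i_1}$-free and Theorem~\ref{thm:Pk} directly yields $c(G) \le i_1 - 2 \le n - 2$. Otherwise, fix any induced $P$, station $i_1$ cops permanently on the vertices of $P$, and use the remaining $n - 2 - i_1 = (i_2 + \cdots + i_k) - 2$ cops to chase the robber inside whichever component $H$ of $G \setminus N[P]$ contains him. The stationary $P$-cops capture the robber in one round whenever he lies in $N[P]$, so he is effectively confined to a single component $H$, which by the lemma is $(P_{i_2} + \cdots + P_{i_k})$-free and therefore handled by the inductive hypothesis (or by Theorem~\ref{thm:Pk} when $k = 2$ and $i_2 \ge 3$) using at most $(i_2 + \cdots + i_k) - 2$ additional cops. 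The total cop budget is $i_1 + (i_2 + \cdots + i_k) - 2 = n - 2$, as required.

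The main obstacle I anticipate is handling the degenerate boundary cases where neither the inductive hypothesis nor Theorem~\ref{thm:Pk} applies directly to $H$ — most notably $k = 2$ with $i_2 \in \{1, 2\}$, and more generally the situation where every $i_j$ for $j \ge 2$ lies in $\{1, 2\}$. When $i_2 = 1$ the $(P_{i_1}+P_1)$-free hypothesis forces $P$ to dominate $G$, so $G \setminus N[P] = \emptyset$; when the remaining $i_j$'s are all at most $2$, components of $G \setminus N[P]$ are edgeless or singleton. These settings are structurally very simple but the naive ``all $i_1$ cops on $P$'' strategy overspends cops by one or two, so the sharp bound $n - 2$ requires reducing the number of $P$-cops below $i_1$, presumably by invoking the shadow/isometric-path idea of the new-method proof of Theorem~\ref{thm:Pk} on $P$ itself. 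I expect most of the technical care to concentrate precisely in this cop-counting step, while the generic inductive step proceeds exactly as sketched above.
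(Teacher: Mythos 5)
Your core decomposition is the same one the paper uses: station cops on an induced path $P$, observe that the robber is confined to a component $H$ of $G\setminus N[P]$ (since every vertex of $N[P]$ is dominated by a cop on $P$), and note that $H$ must be free of the remaining disjoint union, then recurse. The paper never writes out a full proof of Theorem~\ref{thm:PP} --- it only exhibits this method in Propositions~1 and~2 --- but those two propositions make clear what the intended induction is, and it is not quite yours: the paper peels off a \emph{short} path first ($P_1$, i.e.\ a single dominating-cop vertex, or $P_2$, an edge) and reserves the piece $P_{i_j}$ with $i_j\geq 3$ for the final application of Theorem~\ref{thm:Pk}, which needs $i_j-2\geq 1$ cops. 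You peel off the \emph{longest} piece first.

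That ordering choice is where your proposal has a genuine gap. After you station $i_1$ cops on an induced $P_{i_1}$ with $i_1=\max_j i_j\geq 3$, the residual class is $(P_{i_2}+\cdots+P_{i_k})$-free where every remaining $i_j$ may lie in $\{1,2\}$; then neither your inductive hypothesis (which requires some $i_j\geq 3$) nor Theorem~\ref{thm:Pk} applies, and the budget $(i_2+\cdots+i_k)-2$ can even be $0$ or negative. The cleanest instance is $(P_{i_1}+P_1)$-free: your strategy spends $i_1$ cops on $P$ alone, but the theorem only allows $i_1-1$; the paper's Proposition~1 instead spends $1$ cop on a vertex $v_0$ and $i_1-2$ cops inside a $P_{i_1}$-free component. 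You correctly identify this boundary trouble, but your proposed repair (``reducing the number of $P$-cops below $i_1$'' via the isometric-path idea) is not worked out and is unnecessary. The fix is simply to reverse the order of your induction: reindex so that $i_k\geq 3$, peel off induced copies of $P_{i_1},\ldots,P_{i_{k-1}}$ one at a time (handling separately the trivial situation where some $P_{i_j}$ does not occur, in which case $G$ is $P_{i_j}$-free and Theorem~\ref{thm:Pk} finishes immediately), and end with $i_k-2$ cops in a $P_{i_k}$-free component, for a total of $i_1+\cdots+i_{k-1}+(i_k-2)$. With that reordering your structural lemma and cop count go through verbatim and match the paper's method.
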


\section{Forbidden Induced Structure}
Now we begin to prove \autoref{thm:Pk}, the most important thing to prove this theorem is that similar idea can be used in the proof of other theorems in our paper.
\begin{proof}[Proof of \autoref{thm:Pk}]
In the game of cops and robber, we can define a function $f$, $f$ means the minimal distance between the cop pieces and the robber after each round. Let $t$ be the minimum value of $f$ over all rounds. If $t=0$, it means that at least one cop and the robber are on the same vertex after finite rounds. On the other hand, if $t>0$, it means that all the cop pieces and the robber can never be on the same vertex in the whole game.

Suppose that the cop player can never use $k-2$ cop pieces to capture the robber in the graph $G$. That means $t \geq 1$ in the whole game. We can think in another way that after finite rounds, there exists one cop $c_1$ on the vertex $u$ whose distance is $t$ ($t\geq 1)$ from the robber $r$ on the vertex $v$. We can find a shortest path $P_{t+1}$: $u-u_1-\ldots-u_{t-1}-v$ between $u$ and $v$. Since there is no difference between each piece of the cop player, we can suppose all the cop pieces: $c_1,c_2,\ldots, c_{k-2}$ are on the vertex $u$ without loss of generality.  

Now it's the robber's turn to move. In order to keep the minimal distance not decreasing, the robber $r$ must move to a new vertex $v_1$ and the vertex $v_1$ is not adjacent to the vertices $u,u_1,\ldots,u_{t-1}$. That means we can find a shortest path $P_{t+2}$: $u-u_1-\ldots-v-v_1$. Then the cop player will move $k-3$ pieces $c_2,\ldots,c_{k-2}$ to the vertex $u_1$. Since up to now the minimal distance between the cops and the robber is also $t$, the robber must move to another new vertex $v_2$, $v_2$ is not adjacent to the vertices $u,u_1, \ldots, u_{t-1},v$ for the same reason of the first round and we can get a shortest path $P_{t+3}$:$u-u_1-\ldots-v-v_1-v_2$. The cop player will move $k-4$ pieces $c_3,\ldots,c_{k-2}$ to the vertex $u_2$. We use this strategy repeatedly until we don't have cop piece to move to the next vertex, and right now we get a shortest path $P_{t+k-1}$: $u-u_1-\ldots-u_{t-1}-v-v_1-\ldots-v_{k-2}$ and the first $k-2$ vertices are each occupied by one cop.
\begin{figure}[hbt]
\begin{center}

\tikzstyle{straight edge}=[]
\tikzstyle{vertex}=[circle,fill=black,inner sep=1.5pt]

\begin{tikzpicture}[scale=0.8]
	\begin{pgfonlayer}{nodelayer}
		\node [style=vertex, label=above:{$u$},label=below:{$c_1$}] (0) at (0,0) {};
		\node [style=vertex, label=above:{$u_1$},label=below:{$c_2$}] (1) at (1,0) {};
		\node [style=vertex, label=above:{$u_{t-1}$}]  (2) at (2.5,0) {};
		\node [style=vertex, label=above:{$v$}] (3) at (3.5,0) {};
		\node [style=vertex, label=above:{$v_1$}] (4) at (4.5,0) {};
		\node [style=vertex, label=above:{$v_2$}] (5) at (5.5,0) {};
		\node [style=vertex, label=above:{$v_{k-2}$},label=below:{$r$}] (6) at (7,0) {};
	\end{pgfonlayer}
	\begin{pgfonlayer}{edgelayer}
		\draw [style=straight edge] (0) to (1);
		\draw [style=dashed] (1) to (2);
		\draw [style=straight edge] (2) to (3);
		\draw [style=straight edge] (3) to (4);
		\draw [style=straight edge] (4) to (5);
		\draw [style=dashed] (5) to (6);
	\end{pgfonlayer}
\end{tikzpicture}
\end{center}
\label{fig:extendedblockgraph}
\end{figure}

Based on our assumption, $t$ is greater than or equal to $1$, so $t+k-1\geq k$. That means we can get a shortest path $P_k$, however the graph $G$ is $P_k$-free. We draw a contradiction, so we can use $k-2$ cops to catch the robber in the graph $G$.
\end{proof}

The idea we use to prove \autoref{thm:Pk} is very useful to determine the cop number for some graphs with forbidden induced structure. It means that, if we can't use $l$ cops to catch the robber in any graph $G$, we can find an induced path $P_{l+t+1}$: $u_0-\ldots-u_{l+t}$ for $t\geq 1$ in the graph $G$. We can also guarantee the first vertices: $u_0,\ldots, u_{l-1}$ of this path are occupied by the cop $c$ and the last vertex $u_{l+t}$ is occupied by the robber $r$. Next we shall use this idea to prove the following theorems.

\begin{proof}[Proof of \autoref{thm:two}] Suppose not, we can use $l$ cops to form an induced $P_{l+2}$ path: $v_0-v_1-\ldots-v_{l+1}$ and each vertex $v_i$ is occupied by one cop $c_{i+1}$ for $0\leq i \leq l-1$. The robber is on the vertex $v_{l+1}$ by the details in \autoref{thm:Pk}. Now it's the turn for the cop player to move. We let each piece of the cop player move forward to the next vertex along this path. Then the robber cannot stay still, he must move. If he has more than two choices, for example $v_{l+2}, v_{l+2}', \ldots$. Whether or not these vertices are adjacent to each other, the vertices $v_1,\ldots, v_{l+1}$ already form a shortest path $P_{l+1}$. We need to forbid these two structures mentioned in Theorem 2, so one of any two these vertices must be adjacent to one of vertices $v_1,\ldots, v_l$. If either vertex $v_{l+2}$, $v_{l+2}'$, say $v_{l+2}$ is adjacent to any of vertices $v_1, \ldots, v_l$, the robber cannot move to $v_{l+2}$ since $v_1, \ldots, v_l$ are occupied by the cops. There is only one vertex except $v_l$ in the neighbourhood of $v_{l+1}$ which is not adjacent to the vertices $v_1,\ldots, v_l$. That means the robber has only one vertex left to move, we call it $v_{l+2}$. Then the cops will still move forward and the robber will meet the same problem as in the former round. So the robber still has only one choice to move to escape from the capture. However, the graph $G$ is finite and the robber's available vertices will form an induced path or cycle. If it's an induced path, we can use these $l$ cops to catch the robber directly. If it's an induced cycle, we use these $l$ cops to keep the robber moving along this cycle and another cop $c_{l+1}$ to move to any vertex in this cycle and wait for the robber. 
So we can use $l+1$ cops to catch the robber in the graph $G$.

\end{proof}

The same idea in \autoref{thm:Pk} and \autoref{thm:two} is that we always use some cops to pursue the robber and we find that we can get an induced $P_k$ in the graph $G$ by analyzing the process of chase. In \autoref{thm:two}, we use one more cop to wait for the robber since there may exist an induced cycle by analyzing the trail of the robber's movements. Now we can use what we already know to prove the \autoref{thm:claw}.

\begin{proof}[Proof of \autoref{thm:claw}]
Now, we have $k-3$ cop pieces and we can use them to form an induced path $P_{k-1}$: $v_0-v_1-\ldots-v_{k-2}$, the first $k-3$ vertices are occupied by the cops $c_1,\ldots,c_{k-3}$, the robber $r$ is on the vertex $v_{k-2}$. It's the turn for the cop player to move and all cop pieces will move forward along this path, the vertices $v_1,\ldots,v_{k-3}$ are occupied by the cops as a result. Right now the robber is adjacent to the cop $c_{k-3}$ which is on the vertex $v_{k-3}$. All the existing vertices are occupied by the cops or adjacent to one vertex occupied by the cop, so the robber must choose a new vertex $v_{k-1}$ to escape from the capture. However, the path $P_k$ is forbidden in the graph $G$, the new vertex $v_{k-1}$ must be adjacent to at least one of vertices: $v_0,v_1,\ldots,v_{k-3}$. On the another hand, the robber cannot be adjacent to the vertex occupied by the cop directly. The only choice for the vertex $v_{k-1}$ is to be adjacent to the vertex $v_0$ and we have a $C_k$ cycle. The cop player moves the cop $c_1$ backward to the vertex $v_0$ and the other $k-4$ cops move forward along this path. The robber $r$ is adjacent to the vertices $v_0$ and $v_{k-2}$ occupied by the cops $c_1$ and $c_{k-3}$ respectively, so the robber must move to a new vertex $v_k$. However, the vertices \{$v_{k-2},v_{k-1},v_k,v_0$\} will form an induced "claw". In order not to let this happen, the vertex $v_k$ must be adjacent to the vertex $v_0$ or $v_{k-2}$ since $v_0$ and $v_{k-2}$ are not adjacent ($v_0$ and $v_{k-2}$ are vertices of induced path $P_{k-1}$). However, the cop player has already prepared the cops for the vertices $v_0$ and $v_{k-2}$, so the robber will not escape from capture by the cops. That means we can use $k-3$ cops to catch the robber in $G$.

\end{proof}

If we carefully analyze the proof of the \autoref{thm:claw}, we can find that if we use one fewer cop to catch the robber in the $P_k$-free graphs, it must form an induced cycle $C_k$. Also, if we just keep some cops still and make the other cops move forward in the proof of \autoref{thm:claw}, we can also get an induced cycle $C_j$ for $3\leq j\leq k$. So we can get the following Corollary.

\begin{corollary}\label{cor:cy}
Let $G$ be a $\{P_k,C_l\}$-free graph for $k \ge 5$ and $3 \leq l \leq k$, then $c(G) \leq k-3$.
\end{corollary}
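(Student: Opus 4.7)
The plan is to mirror the structure of the proof of \autoref{thm:claw}, but to replace the role of claw-freeness with a length-controlled cycle construction. Assume for contradiction that $k-3$ cops cannot catch the robber. Running the chase argument of \autoref{thm:Pk} and \autoref{thm:claw} with $k-3$ cops, we arrive at a configuration with an induced path $v_0, v_1, \ldots, v_{k-2}$, cops $c_1, \ldots, c_{k-3}$ sitting on $v_0, \ldots, v_{k-4}$, and the robber on $v_{k-2}$.

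The key modification is choosing which cops advance. Set $j = k - l$. I would keep the cops $c_1, \ldots, c_j$ frozen on $v_0, \ldots, v_{j-1}$ and move each of $c_{j+1}, \ldots, c_{k-3}$ one step forward along the path. After this single cop move, the occupied path-vertices are exactly $\{v_0, \ldots, v_{j-1}\} \cup \{v_{j+1}, \ldots, v_{k-3}\}$, leaving a single ``hole'' at $v_j$, and (provided at least one cop advanced) the cop on $v_{k-3}$ threatens the robber at $v_{k-2}$.

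The robber is then forced onto a new vertex $v_{k-1}$, which cannot be adjacent to any cop-occupied vertex. The only candidates inside $\{v_0, \ldots, v_{k-2}\}$ for a neighbor of $v_{k-1}$ are therefore $v_j$ and $v_{k-2}$. Applying $P_k$-freeness to the sequence $v_0 v_1 \ldots v_{k-2} v_{k-1}$ forces $v_{k-1}$ to be adjacent to $v_j$. Consequently the subgraph induced on $\{v_j, v_{j+1}, \ldots, v_{k-2}, v_{k-1}\}$ consists of exactly the consecutive path edges together with $v_{k-2}v_{k-1}$ and $v_{k-1}v_j$, so it is an induced cycle of length $k - j = l$, contradicting the $C_l$-freeness of $G$. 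The case $l = k$ (where $j = 0$) coincides with the original configuration in \autoref{thm:claw} and needs no extra work.

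I expect the main technical obstacle to be the boundary case $l = 3$ (equivalently $j = k-3$), in which no cop would advance and the robber is not threatened at all. For this case I would first run the $l = 4$ strategy to produce an induced $4$-cycle $v_{k-4}v_{k-3}v_{k-2}v_{k-1}$, and then use triangle-freeness to rule out chord adjacencies (for example, $v_{k-5}$ cannot be joined to $v_{k-1}$ without creating a triangle with $v_{k-4}$). This yields a fresh induced path of essentially the same length with the robber relocated at its end, to which the same strategy can be reapplied. Iterating the reduction either captures the robber or forces a triangle, giving the desired contradiction; verifying throughout that the intermediate paths and cycles remain induced, so that the forbidden-subgraph hypotheses keep applying, is the delicate bookkeeping at the heart of the argument.
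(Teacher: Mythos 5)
Your main construction is exactly what the paper intends when it says ``keep some cops still and make the other cops move forward in the proof of \autoref{thm:claw}'': freezing $c_1,\dots,c_j$ with $j=k-l$ and advancing the rest leaves $v_j$ as the unique non-occupied, non-threatened vertex among $v_0,\dots,v_{k-3}$, so $P_k$-freeness forces the robber's escape vertex $v_{k-1}$ to be adjacent to $v_j$, and $\{v_j,\dots,v_{k-2},v_{k-1}\}$ induces a $C_l$. This is correct, and worked out in more detail than the paper's own two-sentence justification, for every $l$ with $4\le l\le k$ (the range in which the cop on $v_{k-4}$ actually advances to $v_{k-3}$ and threatens the robber).

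The genuine gap is the case $l=3$, which you correctly flag but do not close. There, $j=k-3$, no cop moves, the robber is never forced off $v_{k-2}$, and the whole mechanism stalls. Your proposed repair --- run the $l=4$ strategy to obtain an induced $C_4$ on $\{v_{k-4},v_{k-3},v_{k-2},v_{k-1}\}$, extract a ``fresh induced path,'' and iterate --- is not a proof as stated: the new path $v_0\cdots v_{k-5}v_{k-4}v_{k-1}$ has only $k-2$ vertices (one fewer than before), the cops are no longer positioned on its initial segment (one of them now sits on $v_{k-3}$, off the path), and you give no monotone quantity or invariant showing the iteration terminates in capture rather than cycling forever through $C_4$'s and $C_5$'s, which triangle-freeness does not exclude. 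Note that adjacency of the escape vertex to $v_0$ or $v_{k-2}$, which claw-freeness forces in \autoref{thm:claw}, is exactly what triangle-freeness \emph{forbids}, so the second phase of that proof cannot be borrowed either. To be fair, the paper's own remark (``we can also get an induced cycle $C_j$ for $3\le j\le k$'') suffers from precisely the same defect at $j=3$ and does not address it; but since $l=3$ is within the stated range of the corollary, your proof is incomplete there, and this case needs either a separate argument or an explicit exclusion from the statement.
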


Right now, we can answer the following conjectures made by Sivaraman and Testa \cite{1903.11484} partially.

\begin{conjecture}
Let $G$ be a $P_5$-free graph, then $c(G)\leq 2$.
\end{conjecture}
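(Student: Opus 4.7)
The plan is to extend the pursuit framework of \autoref{thm:Pk}. Suppose for contradiction that two cops cannot capture the robber on a $P_5$-free graph $G$. Running the strategy from the proof of \autoref{thm:Pk} with two cops yields, after finitely many rounds, an induced path on $t+3$ vertices (with $t\geq 1$) whose first two vertices carry the cops and whose last vertex carries the robber. Since $G$ is $P_5$-free, we must have $t=1$, and the resulting configuration is an induced $P_4$, say $v_0-v_1-v_2-v_3$, with cops at $v_0,v_1$ and the robber at $v_3$.

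From here I would examine the robber's forced moves. On his next turn, any vertex $v_4$ to which he safely retreats must satisfy $v_4\sim v_3$, $v_4\not\sim v_0$, and $v_4\not\sim v_1$; to avoid creating an induced $P_5$ on $v_0,v_1,v_2,v_3,v_4$ it must also satisfy $v_4\sim v_2$. Hence the robber is confined to the ``trap region'' $R:=N(v_2)\setminus N[v_0,v_1]$. The next step is to argue that the cop at $v_1$ can be shifted onto $v_2$ while the cop at $v_0$ maintains a rear guard, reducing the game to one on the smaller graph $G[R\cup\{v_2\}]$. If this residual subgraph happens to forbid a claw or a short cycle, one can finish using \autoref{thm:claw} or \autoref{cor:cy}. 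A complementary ingredient is the Bacs\'o--Tuza theorem that every connected $P_5$-free graph has a dominating clique or a dominating induced $P_3$; one cop could then shadow the robber's projection onto this dominating substructure while the second cop performs the capture along the path produced by the pursuit argument.

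The main obstacle I anticipate is that neither the pursuit argument nor the dominating-set shadow alone forces a monotone decrease in the robber's available territory: the trap region $R$ can itself contain an induced $P_4$, so the robber may indefinitely reproduce a $P_4$ configuration resembling the original without ever triggering the forbidden $P_5$. Breaking this cycle would likely require a strict potential function (for instance, the number of vertices ever visited by the robber, or a BFS-layer invariant rooted at the cop positions) together with a structural argument tailored to $P_5$-free graphs. This is exactly where the uniform pursuit technique ceases to apply, and it is the reason the full conjecture has thus far resisted a simple combinatorial attack, even though the subcases $\{P_5,K_{1,3}\}$-free and $\{P_5,C_l\}$-free with $l\leq 5$ are already handled by \autoref{thm:claw} and \autoref{cor:cy}.
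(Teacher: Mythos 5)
This statement is a \emph{conjecture} in the paper (attributed to Sivaraman and Testa), and the paper does not prove it; it only answers it partially, by establishing $c(G)=2$ for $\{2P_2,\overline{P_5}\}$-free graphs via \autoref{lm:3} and a case analysis on the neighbourhoods of two cops placed on the middle edge of an induced $P_4$. Your proposal likewise does not constitute a proof, and you say so yourself: after correctly running the pursuit framework of \autoref{thm:Pk} with two cops to reach an induced $P_4$ configuration $v_0-v_1-v_2-v_3$ with the robber confined to $N(v_2)\setminus N[v_0,v_1]$, you have no mechanism that forces termination. The trap region can contain a fresh induced $P_4$, the robber can re-establish an equivalent configuration, and nothing in the argument strictly decreases. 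Naming a hoped-for potential function (vertices visited, BFS layers) is not the same as exhibiting one, and the appeal to Bacs\'o--Tuza is left entirely unexecuted: you do not explain how one cop ``shadows'' the robber's projection onto a dominating clique or dominating $P_3$, which is itself a nontrivial guarding argument that does not follow from anything in this paper.

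Two further cautions. First, the step ``$v_4$ must be adjacent to $v_2$ to avoid an induced $P_5$'' only rules out the specific five-tuple $v_0,v_1,v_2,v_3,v_4$; it silently assumes the robber's escape vertex interacts with no other vertices of the path in a way that kills the induced $P_5$ differently, and it also assumes the cops' positions after their move are exactly $v_0,v_1$, which your own strategy then changes. Second, the subcases you cite as ``already handled'' ($\{P_5,K_{1,3}\}$-free via \autoref{thm:claw} and $\{P_5,C_l\}$-free via \autoref{cor:cy}) do not combine to cover general $P_5$-free graphs, so they cannot be assembled into a proof of the full conjecture. In short: the approach is a reasonable first attack, but the gap you identify at the end is the whole problem, and it remains open both in your write-up and in the paper.
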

\begin{conjecture}
Let $G$ be a $2P_2$-free graph, then $c(G)\leq 2$.
\end{conjecture}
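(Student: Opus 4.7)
The plan is to apply the pursuit strategy of \autoref{thm:Pk} with $l=2$ cops. Suppose, for contradiction, that two cops cannot catch the robber in $G$, and let $t\ge 1$ be the minimum cop-robber distance attained over the entire game. By the general pursuit argument stated in the paragraph following the proof of \autoref{thm:Pk}, we obtain an induced path $P_{t+3}$, say $p_0-p_1-\cdots-p_{t+2}$, in $G$ with the two cops occupying $p_0$ and $p_1$ and the robber at $p_{t+2}$.

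If $t\ge 2$, this induced path has at least five vertices, and the four vertices $\{p_0,p_1,p_{t+1},p_{t+2}\}$ span an induced $2P_2$: the edges $p_0p_1$ and $p_{t+1}p_{t+2}$ lie along the path, while each cross pair has path-distance at least $t\ge 2$ and is therefore non-adjacent in $G$. This contradicts the $2P_2$-freeness of $G$, so $t\ge 2$ is impossible.

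It therefore remains to handle $t=1$, where the pursuit lemma yields only an induced $P_4\colon p_0-p_1-p_2-p_3$; this is the main obstacle. The plan is to continue the pursuit for one more round: move the cop at $p_1$ to $p_2$, forcing the robber at $p_3$ to retreat to a new vertex $w$ with $w\not\sim p_0,p_2$. If additionally $w\not\sim p_1$, then $p_0-p_1-p_2-p_3-w$ is an induced $P_5$ and we extract an induced $2P_2$ on $\{p_0,p_1,p_3,w\}$ exactly as in the previous paragraph. If instead $w\sim p_1$, the four vertices $\{p_1,p_2,p_3,w\}$ induce a $4$-cycle, and the robber's reachable region begins to collapse; iterating the pursuit forces this region to close into an induced cycle. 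Because any induced $C_n$ with $n\ge 6$ contains an induced $2P_2$ (take two non-consecutive edges), this cycle must have length $3$, $4$, or $5$, and a further argument---exploiting the fact that connected $2P_2$-free graphs have diameter at most $3$---should show that two cops can corner the robber on such a short cycle. The hard part will be making this cycle analysis rigorous and controlling the attachments between the cycle and the rest of $G$, since branches off the cycle can in principle give the robber additional escape routes that must each be eliminated using the $2P_2$-free hypothesis.
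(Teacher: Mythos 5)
First, note that this statement is stated in the paper as a \emph{conjecture} of Sivaraman and Testa; the paper does not prove it. It only establishes the special case of $\{2P_2,\overline{P_5}\}$-free graphs, by a completely different route: split on the diameter (which is at most $3$ for connected $2P_2$-free graphs), quote the Sivaraman--Testa lemma for diameter $3$, and for diameter $2$ place the two cops on the middle vertices of an induced $P_4$ and analyse the partition of the remaining vertices into the cops' private neighbourhoods $U,V$, their common neighbourhood $W$, and the non-neighbours $Z$. Your pursuit approach is different, and its first half is sound: for $t\ge 2$ the induced $P_{t+3}$ contains an induced $2P_2$ on its two end-edges, so the minimum cop--robber distance would have to be $1$.

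The $t=1$ case, however, is a genuine gap rather than a missing detail, and it is exactly where the known difficulty lies (the paper's concluding remarks point out that the pursuit argument of Theorem 1 bottoms out at $P_4$ when only two cops are available, and an induced $P_4$ contains no $2P_2$). When the escape vertex $w$ is adjacent to $p_1$ you obtain only an induced $C_4$, and from there you assert that ``iterating the pursuit forces this region to close into an induced cycle'' and that two cops can then corner the robber on a cycle of length at most $5$. The first assertion is imported from the proof of Theorem 2, where it holds because the forbidden structures there force the robber to have a \emph{unique} escape vertex at every step; under mere $2P_2$-freeness the robber can have many pairwise non-adjacent escape options each round, so its trail need not close into a path or cycle at all. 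The second assertion is also unsupported: a short induced cycle with arbitrary attachments to the rest of $G$ is not a region two cops obviously control, and you acknowledge that the branch vertices ``must each be eliminated'' without saying how. As it stands the proposal reduces the conjecture to its hardest subcase and stops there; closing it would require a new idea for $t=1$ (the paper instead sidesteps the problem by adding the hypothesis that $G$ is $\overline{P_5}$-free, which is what kills the problematic configurations in its case analysis).
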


$2P_2$-free means ($P_2+P_2$)-free, we shall use the lemma made by Sivaraman and Testa \cite{1903.11484}.

\begin{lemma}\label{lm:3} \cite{1903.11484}
Let $G$ be a $2P_2$-free graph with diameter $3$. Then $c(G)\leq 2$.
\end{lemma}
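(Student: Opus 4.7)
The plan is to first establish a structural lemma for $2P_2$-free graphs and then leverage it together with the diameter-$3$ assumption to design an explicit two-cop strategy. The key observation is that in a $2P_2$-free graph $G$, for every edge $uv\in E(G)$ and every vertex $w\in V(G)\setminus\{u,v\}$, one has $\min\{d(u,w),d(v,w)\}\le 2$. I would prove this by contradiction: assuming both distances are at least $3$, take the last edge $xw$ of a shortest $v$--$w$ path; then $u,v,x,w$ are pairwise distinct, and each of the four possible additional edges among them is forbidden (each would force either $d(u,w)\le 2$, $d(v,w)\le 2$, or contradict the shortest-path property), so $\{uv,xw\}$ would be an induced $2P_2$.

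Next I would exploit the diameter hypothesis. Fix a diametrical pair $u^{*},w^{*}$ with $d(u^{*},w^{*})=3$ and a shortest path $u^{*}-a-b-w^{*}$. A direct $2P_2$-free argument, pairing an edge $u^{*}y$ (for $y\in N(u^{*})\setminus\{a\}$) against the edge $bw^{*}$, forces $y$ to be adjacent to $b$; symmetrically, every vertex of $N(w^{*})$ is adjacent to $a$. Placing the cops $c_1,c_2$ on $a$ and $b$, every vertex of $N[u^{*}]\cup N[w^{*}]\cup\{a,b\}$ is already within distance $1$ of a cop, so the only positions the robber can use to avoid being caught immediately are those with $d(a,r),d(b,r)\ge 2$; the structural lemma above then pins $r$ at distance exactly $2$ from one of $a,b$.

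From here the cops pursue greedily: the closer cop, say $c_1$ at $a$, advances to the midpoint $m$ of a shortest $a$--$r$ path, and $c_2$ slides from $b$ to $a$, so that the cops still occupy an edge $am$ of $G$ to which the structural lemma reapplies. Every safe move of the robber is then constrained to lie outside $N[a]\cup N[m]$, and the $2P_2$-free property applied to the disjoint edges arising in the resulting configuration prunes her options further. The main obstacle will be the termination argument: one must certify that the robber cannot cycle forever at distance exactly $2$ from the cops' current edge. My plan there is to introduce a lexicographic potential (for example, the distance from $r$ to $w^{*}$ paired with the number of her currently safe neighbors) and to verify, via one more application of $2P_2$-freeness to the disjoint edges generated by the robber's walk, that this potential strictly decreases each round; any configuration in which it could remain constant would produce two disjoint induced edges in $G$, contradicting the hypothesis.
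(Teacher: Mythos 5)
The paper does not actually prove this lemma: it is imported verbatim from Sivaraman and Testa \cite{1903.11484} and used as a black box, so there is no internal proof to compare your attempt against. Judged on its own merits, your first two paragraphs are sound. The observation that in a $2P_2$-free graph every vertex lies within distance $2$ of an endpoint of every edge is correct, as is the claim that for a diametrical pair $u^*,w^*$ at distance $3$ with shortest path $u^*-a-b-w^*$ every neighbour of $u^*$ is adjacent to $b$ and every neighbour of $w^*$ to $a$ (the same pairing of $u^*y$ against $w^*z$ gives the stronger fact that $N(u^*)$ is completely joined to $N(w^*)$). Placing the cops on $a$ and $b$ therefore does force the robber to start at distance exactly $2$ from one of them.

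The genuine gap is the third paragraph, which is where the lemma actually has to be proved. You describe a greedy pursuit that preserves the invariant ``the cops occupy an edge and the robber sits at distance exactly $2$ from one endpoint,'' and you yourself note that nothing so far prevents this invariant from persisting forever. The proposed remedy --- a lexicographic potential consisting of $d(r,w^*)$ paired with the number of the robber's currently safe neighbours --- is announced but never verified, and as stated it is not believable: $d(r,w^*)$ takes at most four values and there is no reason the robber's move must decrease it (she may circulate within a fixed distance class of $w^*$), nor is any argument offered that the count of safe neighbours drops while the first coordinate stays constant; the sentence ``any configuration in which it could remain constant would produce two disjoint induced edges'' is an assertion, not a deduction. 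Since a finite-graph pursuit argument lives or dies on exactly this kind of progress measure, the proof is incomplete at its crux. To repair it you would need either a quantity whose monotonicity you actually check move by move, or a different endgame that exploits the stronger structure you already have available --- for instance the complete join between $N(u^*)$ and $N(w^*)$ and the resulting constraints on the distance layers from $u^*$ --- to corner the robber directly rather than chase her.
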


\begin{theorem}
Let $G$ be a \{$2P_2$, $\overline P_5$\}-free graph, then $c(G)=2$.
\end{theorem}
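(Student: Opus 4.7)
The plan is to reduce to Lemma~\ref{lm:3} by first controlling the diameter of $G$, and then handle the remaining small-diameter cases by the chase template developed in the proofs of Theorems~\ref{thm:Pk}--\ref{thm:claw}.

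First I would observe that any $2P_2$-free graph has diameter at most $3$: if $u,v\in V(G)$ satisfied $d(u,v)\ge 4$ with shortest path $u=x_0x_1x_2x_3x_4\cdots v$, then $\{x_0,x_1,x_3,x_4\}$ would induce a $2P_2$, since each of $x_0x_3,x_0x_4,x_1x_3,x_1x_4$ would shortcut the path. This splits the argument into the cases $\mathrm{diam}(G)\in\{1,2,3\}$. If $\mathrm{diam}(G)\le 1$ then $G$ is complete and $c(G)\le 1$, and if $\mathrm{diam}(G)=3$ then Lemma~\ref{lm:3} gives $c(G)\le 2$ at once.

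The substantive case is $\mathrm{diam}(G)=2$. Here I would assume for contradiction that two cops cannot catch the robber and run the chase of Theorem~\ref{thm:Pk}. Because $2P_2$-free implies $P_5$-free, the chase can produce at most an induced $P_4$, so we arrive at a configuration with cops on $v_0,v_1$, the robber on $v_3$, and $v_0v_1v_2v_3$ induced. Advancing both cops, the robber must escape to some $v_4\in N(v_3)\setminus N[\{v_1,v_2\}]$; the $P_5$-freeness forces $v_4v_0\in E(G)$, so $\{v_0,\ldots,v_4\}$ induces a $C_5$. Now I would push the cops to $v_0,v_3$ and examine the next escape vertex $v_5$, which must be non-adjacent to $v_0$ and $v_3$. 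Applying $2P_2$-freeness to $\{v_1,v_2,v_4,v_5\}$ forces $v_5$ adjacent to $v_1$ or $v_2$, and applying $\overline{P_5}$-freeness to $\{v_0,v_1,v_2,v_4,v_5\}$ rules out both simultaneously (these five vertices would induce precisely the house graph $\overline{P_5}$: the $4$-cycle $v_0v_1v_5v_4$ with the extra vertex $v_2$ adjacent to the adjacent pair $v_1,v_5$). A short case analysis on which of the two remaining adjacencies occurs, followed by one or two further rounds of chase around the $C_5$, should either trap the robber or produce a forbidden induced $2P_2$ or $\overline{P_5}$ in one of the $5$-vertex subsets spanned by the chase.

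The main obstacle is this diameter-$2$ analysis. In general, diameter-$2$ graphs such as the Petersen graph can have $c(G)\ge 3$, so the $\overline{P_5}$-free hypothesis must do genuine work: its role is precisely to block the house configuration in the second neighborhood of the cops, which would otherwise allow the robber to rotate between overlapping induced $5$-cycles. Making the termination rigorous will require a careful enumeration of the adjacencies between each successive escape vertex and the previously discovered $C_5$, and is where the bulk of the verification lies.
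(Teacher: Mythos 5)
Your reduction to the diameter-$2$ case is fine and matches the paper's (both invoke Lemma~\ref{lm:3} for diameter $3$; your observation that $2P_2$-freeness caps the diameter at $3$ is correct), and your opening moves in the diameter-$2$ case are individually sound: the induced $P_4$, the forced extension to an induced $C_5$ via $P_5$-freeness, and the use of the house to show the next escape vertex $v_5$ is adjacent to exactly one of $v_1,v_2$ all check out. But the proof stops exactly where the theorem actually lives. "A short case analysis \dots followed by one or two further rounds of chase \dots should either trap the robber or produce a forbidden induced subgraph" is not an argument, and you concede yourself that "the bulk of the verification" is still to be done. The danger is real: nothing you have written rules out the robber rotating indefinitely through a sequence of overlapping $5$-cycles, each escape vertex being legitimized by an adjacency back to an earlier cop position (e.g.\ if $v_5\sim v_2$ and the cops advance to $v_1,v_2$, the next escape vertex $v_6$ is forced by $2P_2$-freeness applied to $\{v_0,v_1,v_5,v_6\}$ to be adjacent to $v_0$, and the configuration has essentially reproduced itself one step along). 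Without an explicit termination invariant, the dynamic chase does not close.

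For comparison, the paper avoids this entirely by a static argument in the diameter-$2$ case: it places the two cops on the \emph{internal} vertices $u_0,v_0$ of an induced $P_4$, partitions the remaining vertices into the private neighbourhoods $U,V$, the common neighbourhood $W$, and the non-neighbours $Z$ (which is independent by $2P_2$-freeness), and then splits on whether the robber's vertex $z_0$ has a neighbour in $W$. Each branch terminates in two or three moves because every potential escape vertex is pinned down by a single application of $2P_2$- or house-freeness relative to this fixed partition. If you want to salvage your chase-based route, you would need to supply the missing enumeration of adjacencies between each new escape vertex and the full set $\{v_0,\dots,v_4\}$ and prove that the set of legal robber positions shrinks; as written, the proposal has a genuine gap at its central step.
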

\begin{proof}
We already know that if $G$ is $2P_2$-free and the diameter of $G$ is 3, we can use 2 cops to capture the robber by Lemma 1. So we just need to consider the case where the diameter of $G$ is $2$. The graph of $\overline P_5$ (complement graph of $P_5$) is like a house and we call it "house" in our proof. The graph $G$ must contain an induced $P_4$ since we already know that we can use two cops to catch the robber in any $P_4$-free graph by \autoref{thm:Pk}. We put two cops on the two internal vertices $u_0$ and $v_0$ of this $P_4$: $u_1-u_0-v_0-v_1$ respectively. The robber $r$ will choose one vertex $z_0$ which is not adjacent to $u_0$ or $v_0$ to stay. Then the vertices except these two vertices in the graph $G$ are divided into four parts: the cop $c_1$'s private neighbourhood (the vertex adjacent to $c_1$ not adjacent to $c_2$) denoted by $U$, the cop $c_2$'s private neighbourhood (the vertex adjacent to $c_2$ not adjacent to $c_1$) denoted by $V$, the common neighbourhood of $c_1$,$c_2$ (the vertex adjacent to both $c_1$ and $c_2$) denoted by $W$ and the vertices not adjacent to $c_1$ or $c_2$ denoted by $Z$. The vertices in $Z$ is an independent set since all the vertices in $Z$ are not adjacent to $P_2$: $u_0-v_0$. 
\begin{center}
\begin{tikzpicture}
	\draw  (1,0) circle (2pt) -- (3,0) circle  (2pt);
\filldraw (1,0) -- (0.5,-1.5) circle  (2pt);
\filldraw (1,0) -- (2,-1.5) circle  (2pt);
\filldraw (3,0) -- (2,-1.5) circle  (2pt);
\filldraw (3,0) -- (3.5,-1.5) circle  (2pt);
\draw (1,-1.5) arc (0:360:1cm and 0.5cm) ;
\draw (2.7,-1.5) arc (0:360:0.7cm and 0.5cm) ;
\draw (5,-1.5) arc(0:360:1cm and 0.5cm);
\draw (3,-3) arc(0:360:1cm and 0.5cm);
\draw (2,-3) circle (2pt) node[above]{$r$};
\draw (3,-3) node[right]  {$Z$};
\draw (0.3,-1.5) node[left] {$U$};
\draw (2,-1.5) node[below] {$W$};
\draw (3.7,-1.5) node[right] {$V$};
\draw (1,0) node[above]{$c_1$};
\draw (1,0) node[left]{$u_0$};
\draw (3,0) node[above]{$c_2$};
\draw (3,0) node[right]{$v_0$};

\end{tikzpicture}
\end{center}
\noindent Case 1: If $z_0$ is adjacent to some vertex $w_0$ in $W$, which means the robber is adjacent to one common neighbor of $c_1$ and $c_2$. 

We move the cop $c_1$ to the vertex $w_0$, the robber must move and he has three possible choices to move: the sets $U$, $W$, $V$. The parts $W$ and $V$ are controlled by the cop $c_2$ so the robber cannot move to these parts actually. The available choice for the robber is to move to the part $U$ to escape from the capture. For example, the robber moves to one vertex $u'$ in $U$, this vertex must be adjacent to the vertex $w_0$ to prevent from forming an induced "house":\{$v_0$, $u_0$, $w_0$, $u'$, $z_0$\}. So the cop $c_1$ will catch the robber in the next round.\\

\noindent Case 2: If $z_0$ is not adjacent to any vertex in $W$ or $W$ is an empty set.

Since the diameter of $G$ is 2, the vertex $z_0$ must be adjacent to some vertex in $U$ and $V$. These two vertices in $U$ and $V$ are denoted by $u_1$ and $v_1$ respectively for convenience. We move the cop $c_1$ to the vertex $u_1$ and the cop $c_2$ to the vertex $v_1$. The robber must move to escape from the capture. The only choices available for him are the parts $U$ and $V$. Without loss of generality, let the robber move to one vertex $u_2$ in $U$ which is not adjacent to $u_1$. In the next round, let the cop $c_1$ move back to the vertex $u_0$ and the cop $c_2$ stay still. The only available choices for the robber are the parts $Z$ and $V$.

Case 2(i): If the robber moves to some vertex $z_1$ in $Z$. This vertex must be different from the vertex $z_0$ since the cop $c_2$ is still controlling the vertex $z_0$. Actually, the vertex $z_1$ doesn't exist since the vertex \{$z_1-u_2$, $v_0-v_1$\} will form an induced $2P_2$. 

Case 2(ii): If the robber moves to some vertex $v_2$ which is not adjacent to the vertex $v_1$ in $V$. Let the cop $c_2$ move back to the vertex $v_0$ and the cop $c_1$ stay still. Then the robber will move back to some vertex $z_2$ in the part $Z$. The vertex $z_2$ must be different from the vertex $z_0$. Suppose not, we can find an induced "house": \{$z_0$, $u_2$, $v_2$, $u_0$, $v_0$\}. The vertex $v_2$ is not adjacent to $z_0$ for the same reason. The vertex $z_2$ must be adjacent to the vertex $v_1$ prevent from forming induced $2K_2$: \{$z_0$-$v_1$, $z_2$-$v_2$\}. The vertex $z_2$ must be adjacent to the vertex $u_2$ to prevent from forming induced $2P_2$: \{$z_2$-$v_1$, $u_0$-$u_2$\}. However, in that way we can get an induced "house": \{$z_2$, $u_2$, $v_2$, $u_0$, $v_0$\}. That means the vertex $z_2$ available for the robber to move actually doesn't exist.

So we can use two cops to catch the robber for each case in \{$2P_2$, $\overline P_5$\}-free graph.

\end{proof}

\noindent\textbf{Remark:} We know that we can use two cops to catch the robber for any \{$2P_2$, $C_i$ ($i=3,4,5$)\}-free graphs, the complement graph $\overline{P_5}$ of $P_5$ contains an induced triangle and $C_4$. So this theorem is more generalized than the theorem for \{$2P_2$, $C_i$ ($i=3,4,5$)\}-free.

Now there is only one theorem left for us, we wouldn't like to show the proof of \autoref{thm:PP} since there are complicated notations in this proof. We only prove two propositions just using the same method in the proof of \autoref{thm:PP}.

\begin{proposition}
Let $G$ be a $(P_1+P_k)$-free graph for $k\geq 3$. Then $c(G)\leq k-1$.
\end{proposition}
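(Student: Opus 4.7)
My plan is to adapt the chase strategy from the proof of \autoref{thm:Pk}. Suppose for contradiction that $k - 1$ cops cannot capture the robber on $G$. As in \autoref{thm:Pk}, I assemble all $k - 1$ cops at a single vertex at distance $t \geq 1$ from the robber, and then run the chase: the robber is successively forced to extend a shortest path starting at that vertex, while the cops march along behind. After the chase we have an induced path
\[
u_0 - u_1 - \cdots - u_{k-1+t}
\]
on $k + t$ vertices, with cops occupying $u_0, u_1, \ldots, u_{k-2}$ and the robber at $u_{k-1+t}$.

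If $t \geq 2$, the path has at least $k + 2$ vertices, and the initial segment $u_0 - u_1 - \cdots - u_{k-1}$ is an induced $P_k$. By inducedness of the whole path, $u_{k+1}$ is non-adjacent to each vertex of this segment, so $\{u_{k+1}\} \cup \{u_0, \ldots, u_{k-1}\}$ induces $P_1 + P_k$, contradicting $(P_1 + P_k)$-freeness. Thus I may assume $t = 1$, leaving an induced $P_{k+1}$: $u_0 - u_1 - \cdots - u_k$ with cops at $u_0, \ldots, u_{k-2}$ and the robber at $u_k$.

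To handle $t = 1$, I play one further round. All cops advance by one, so cop at $u_i$ moves to $u_{i+1}$; the cops now occupy $u_1, \ldots, u_{k-1}$ and the robber at $u_k$ is adjacent to a cop. The robber must therefore move to a new vertex $u_{k+1}$, a neighbor of $u_k$ that is non-adjacent to each of $u_1, \ldots, u_{k-1}$. If $u_{k+1}$ is also non-adjacent to $u_0$, then $\{u_{k+1}\} \cup \{u_0, \ldots, u_{k-1}\}$ once again induces $P_1 + P_k$, contradiction. Otherwise $u_{k+1}$ is adjacent to $u_0$, and we have produced an induced cycle $C_{k+2}$ inside $G$.

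The main obstacle is this remaining cycle sub-case. My plan is to continue pursuing the robber around $C_{k+2}$ using the $k - 1 \geq 2$ cops, exploiting the strong domination imposed by $(P_1 + P_k)$-freeness: every induced $P_k$ lying inside $C_{k+2}$ dominates $V(G)$. At each subsequent round, the cops cover all but one vertex of a carefully chosen dominating induced $P_k$ along the cycle, so that any escape vertex $z$ chosen by the robber is forced to be adjacent to precisely the unique uncovered path vertex. Iterating this forcing and tracking the resulting adjacencies either corners the robber or produces a vertex whose adjacency pattern, paired with an appropriate induced $P_k$ on the cycle, realises a forbidden $P_1 + P_k$. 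Making this induced $P_1 + P_k$ explicit at each step is the technically delicate part of the proof.
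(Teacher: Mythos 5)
Your reduction to the case of an induced $C_{k+2}$ is correct as far as it goes: the $t\ge 2$ case does yield an induced $P_1+P_k$ (the segment $u_0\hbox{--}\cdots\hbox{--}u_{k-1}$ plus the far endpoint $u_{k+1}$), and the extra round in the $t=1$ case correctly forces either a $P_1+P_k$ or an induced $C_{k+2}$. But the proof stops exactly where the real work begins. The final paragraph is only a plan: you assert that the cops can keep covering ``all but one vertex of a carefully chosen dominating induced $P_k$ along the cycle'' and that iterating this either corners the robber or exhibits a forbidden $P_1+P_k$, yet none of this is carried out. The difficulty is that once the robber reaches $u_{k+1}$ he is not confined to the cycle; at every subsequent step he may escape to fresh vertices of $G$, and the observation that every induced $P_k$ dominates $V(G)$ only tells you each escape vertex has \emph{some} neighbour on the chosen path --- it does not tell you the cops, whose moves are constrained by adjacency, can occupy the right $k-1$ vertices in time. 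As written, the argument for the cycle case is a conjecture, not a proof, so the proposal has a genuine gap.

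For comparison, the paper avoids the chase entirely in this proposition and uses a decomposition argument: place all $k-1$ cops on an arbitrary vertex $v_0$; every component $U$ of $G-N[v_0]$ induces a $P_k$-free graph, since an induced $P_k$ in $U$ together with $v_0$ would be an induced $P_1+P_k$; one cop stays on $v_0$ to guard $N[v_0]$ and confine the robber to its component, and the remaining $k-2$ cops win inside that component by \autoref{thm:Pk}. You may want to look for a reduction of this kind --- isolating a $P_k$-free region and invoking \autoref{thm:Pk} as a black box --- rather than re-running the chase with one extra cop, since the latter leads precisely to the unresolved cycle configuration.
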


\begin{proof}
In the beginning, we put all the $k-1$ cops on any fixed vertex $v_0$. Then the induced subgraph generated by the vertices in $V(G)$ except the vertex $v_0$ and its neighbourhood $N(v_0)$ will form a few components $U_1,U_2,\ldots$. Then robber will choose some vertex $v_1$ in one of these components, we call it $U_1$ since the vertex $v_0$ is occupied by the cops. And also, we know that in the induced subgraph $G[U_1]$ generated by the vertices in the component $U_1$ is $P_k$ forbidden for the reason that all the vertices in $U_1$ is not adjacent to the vertex $v_0$. 

We put one cop remaining on the vertex $v_0$ and all the other $k-2$ cops move to the component $U_1$. If the robber wants to move to the other component, he must go through one of the vertices in $N(v_0)$ or $v_0$, that is impossible since the vertex $v_0$ is always occupied by the cop. So the robber will always stay in the component $U_1$, and the induced subgraph $G[U_1]$ is $P_k$-free, we have $k-2$ cops in the component $U_1$ after the cops finish their movements. However, we already knew that $k-2$ cops can catch the robber in any $P_k$-free graphs for $k\geq3$. As a result, we can use $k-1$ cops to catch the robber in any $(P_k,P_1)$-free graphs for $k\geq3$. 
\end{proof}

\begin{proposition}
Let $G$ be a $(P_2+P_k)$-free graph for $k\geq3$. Then $c(G)\leq k$.
\end{proposition}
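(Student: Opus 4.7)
The plan is to generalize Proposition 1 by guarding an edge of $G$ instead of a single vertex. We may assume $G$ has at least two vertices (otherwise $c(G)=1\leq k$), and since $G$ is connected it contains some edge $v_0v_0'$. First, I would place one cop on $v_0$, one on $v_0'$, and the remaining $k-2$ cops on $v_0$ for definiteness. The two cops stationed on $v_0$ and $v_0'$ will act as sentries and never move for the rest of the game.

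After the robber is placed, he must avoid $N[v_0,v_0']$, since any vertex in that set is occupied by or adjacent to a sentry and would be captured on the following cop-move. Hence the robber starts in some component $U$ of the induced subgraph $G\setminus N[v_0,v_0']$. The same reasoning shows that the robber can never leave $U$ during the game: any step out of $U$ would land him in $N[v_0,v_0']$, where a sentry captures him. Thus the robber is confined to $U$ forever, and the cop player is free to walk the remaining $k-2$ pieces into $U$ (possible because $G$ is connected).

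The key structural observation is that $G[U]$ is $P_k$-free: since $U$ is a component of $G\setminus N[v_0,v_0']$, no vertex of $U$ is adjacent to $v_0$ or $v_0'$, so an induced $P_k$ inside $G[U]$ together with the edge $v_0v_0'$ would yield an induced $P_2+P_k$ in $G$, contradicting the hypothesis. Once inside $U$, the $k-2$ roaming cops execute the strategy of Theorem \ref{thm:Pk} on the connected $P_k$-free graph $G[U]$ and catch the robber in finitely many rounds, giving $2+(k-2)=k$ cops in total. The main obstacle is the verification that the robber is permanently trapped in $U$; this is precisely where the two sentries do the work, and where the argument diverges from Proposition 1, in which guarding a single vertex sufficed because a forbidden $P_1+P_k$ needs only one blocked vertex, whereas a forbidden $P_2+P_k$ needs an entire blocked edge.
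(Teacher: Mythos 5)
Your proposal is correct and follows essentially the same route as the paper: station two cops as permanent sentries on an edge $v_0v_0'$, observe that the robber is confined to a component of $G\setminus N[v_0,v_0']$, which must be $P_k$-free lest it yield an induced $P_2+P_k$ with the edge $v_0v_0'$, and finish with $k-2$ cops via Theorem~\ref{thm:Pk}. Your write-up merely spells out the confinement argument that the paper leaves implicit.
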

\begin{proof}
In the beginning, we can find two adjacent vertices $v_0,v_1$ and we put one cop on the vertex $v_1$ and other $k-1$ cops on the vertex $v_0$. We also find that all the components for the induced subgraph $G[V(G)\backslash N[v_0,v_1]]$ is $P_k$-free and we keep two cops remaining on the vertex $v_0$ and $v_1$, the other $k-2$ cops will catch the robber eventually.
\end{proof}

\section{Concluding Remarks}
In this paper we find a powerful idea to solve the problem about how many cops we need to capture the robber in some graphs with forbidden induced subgraphs. We also answer the conjecture made by Sivaraman and Testa \cite{1903.11484} partially. The difficulty for proving the cop number of $2P_2$-free graphs is $2$ is that Theorem 1 is right for the condition $k\geq 3$. We cannot use "zero" cops to capture the robber in any $P_2$-free graph.

\bibliographystyle{abbrv}

\end{document}